\newtheorem{my_conjecture}[theorem]{\bf Conjecture/Question}
\title{A Note on the Expected Length of the Longest Common Subsequences of two i.i.d.~Random Permutations}
\author{Christian Houdr\'{e}\thanks{Research supported in part by the grants \# 246283 and \# 524678 from the Simons Foundation.}\\
\small School of Mathematics\\[-0.8ex]
\small Georgia Institute of Technology\\[-0.8ex] 
\small Atlanta, Georgia, U.S.A.\\
\small\tt houdre@math.gatech.edu\\
\and
Chen Xu \\
\small School of Mathematics\\[-0.8ex]
\small Georgia Institute of Technology\\[-0.8ex]
\small Atlanta, Georgia, U.S.A.\\
\small\tt cxu60@gatech.edu}
\begin{document}

\maketitle

% E-JC papers must include an abstract. The abstract should consist of a
% succinct statement of background followed by a listing of the
% principal new results that are to be found in the paper. The abstract
% should be informative, clear, and as complete as possible. Phrases
% like "we investigate..." or "we study..." should be kept to a minimum
% in favor of "we prove that..."  or "we show that...".  Do not
% include equation numbers, unexpanded citations (such as "[23]"), or
% any other references to things in the paper that are not defined in
% the abstract. The abstract may be distributed without the rest of the
% paper so it must be entirely self-contained.  Try to include all words
% and phrases that someone might search for when looking for your paper.

\begin{abstract}
We address a question and a conjecture on the expected length of the
longest common subsequences of two i.i.d.$\ $random permutations
of $[n]:=\{1,2,...,n\}$. The question is resolved by showing that
the minimal expectation is not attained in the uniform case. The conjecture
asserts that $\sqrt{n}$ is a lower bound on this expectation, but
we only obtain $\sqrt[3]{n}$ for it.
\end{abstract}

\section{Introduction}

The length of the longest increasing subsequences ($\mathit{LISs}$)
of a uniform random permutation $\sigma\in{\cal S}_{n}$ (where ${\cal S}_{n}$
is the symmetric group) is well studied and we refer to the monograph
\cite{romik2015surprising} for precise results and a comprehensive
bibliography on this subject. Recently, \cite{houdre2014central}
showed that for two independent random permutations $\sigma_{1},\ \sigma_{2}\in{\cal S}_{n}$,
and as long as $\sigma_{1}$ is uniformly distributed and regardless
of the distribution of $\sigma_{2}$, the length of the longest common
subsequences ($LCSs$) of the two permutations is identical in law
to the length of the $LISs$ of $\sigma_{1}$, i.e. $LCS(\sigma_{1},\sigma_{2})=^{\mathcal{L}}LIS(\sigma_{1})$.
This equality ensures, in particular, that when $\sigma_{1}$ and
$\sigma_{2}$ are uniformly distributed, $\mathbb{E}LCS(\sigma_{1},\sigma_{2})$
is upper bounded by $2\sqrt{n}$, for any $n$, (see \cite{pilpel1990descending})
and asymptotically of order $2\sqrt{n}$ (\cite{romik2015surprising}).
It is then rather natural to study the behavior of $LCS(\sigma_{1},\sigma_{2})$,
when $\sigma_{1}$ and $\sigma_{2}$ are i.i.d.$\ $but not necessarily
uniform. In this respect, Bukh and Zhou raised, in \cite{bukh2016twins},
two issues which can be rephrased as follows:
\begin{my_conjecture}
Let $P$ be an arbitrary probability distribution on ${\cal S}_{n}$.
Let $\sigma_{1}$ and $\sigma_{2}$ be two i.i.d. permutations sampled
from $P$. Then $\mathbb{E}_{P}[LCS(\sigma_{1},\sigma_{2})]\ge\sqrt{n}$.
It might even be true that the uniform distribution $U$ on ${\cal S}_{n}$
gives a minimizer.
\end{my_conjecture}
Below we prove the suboptimality of the uniform distribution by explicitly
building a distribution having a  smaller expectation. In the next
section, before presenting and proving our main result, we give a
few definitions and formalize this minimizing problem as a quadratic
programming one. Section \ref{sec:propertyl} further explore some
properties of the spectrum of the coefficient matrix of our quadratic
program. In the concluding section, a quick cubic root lower bound
is given along with a few pointers for future research.

%%%%%%%%%%%%%%%%%%%%%%%%%%%%%%%%%%%%%%%%%%%%%%%%%%%
\section{Main Results\label{Sec:main}}

We begin with a few notations. Throughout, $\sigma$ and $\pi$ are,
respectively, used for random and deterministic permutations. By convention,
$[n]:=\{1,2,3,...,n\}$ and so $\{\pi_{i}\}_{i\in[n!]}={\cal S}_{n}$
is a particular ordered enumeration of ${\cal S}_{n}$. (Some other
orderings of ${\cal S}_{n}$ will be given when necessary.) Next,
a random permutation $\sigma$ is said to be sampled from $P=(p_{i})_{i\in[n!]}$,
if $\mathbb{P}_{P}(\sigma=\pi_{i})=p_{i}$. The uniform distribution
is therefore $U=(1/n!)_{i\in[n!]}$ and, for simplification, it is
denoted by $E/n!$, where $E=(1)_{i\in[n!]}$ is the $n$-tuple only
made up of ones. When needed, a superscript will indicate the degree
of the symmetric group we are studying, e.g., $\sigma^{(n)}$ and
$P^{(n)}$ are respectively a random permutation and distribution
from ${\cal S}_{n}$. 

Let us now formalize the expectation as a quadratic form:
\begin{align}
\mathbb{E}_{P}[LCS(\sigma_{1},\sigma_{2})] & =\sum_{i,j\in[n!]}p_{i}LCS(\pi_{i},\pi_{j})p_{j}\nonumber \\
 & =\sum_{i,j\in[n!]}p_{i}\ell_{ij}p_{j}=P^{T}L^{(n)}P,\label{eq:quadoptform}
\end{align}
where $\ell_{ij}:=LCS(\pi_{i},\pi_{j})$ and $L^{(n)}:=\{\ell_{ij}\}_{(i,j)\in[n!]\times[n!]}$.
It is clear that $\ell_{ij}=\ell_{ji}$ and that $\ell_{ii}=n$. A
quick analysis of the cases $n=2$ or $3$ shows that both $L^{(2)}$
and $L^{(3)}$ are positive semi-definite. However, this property
does not hold further:
\begin{lemma}
\label{lem:nondef}For $n\ge4,$ the smallest eigenvalue $\lambda_{1}^{(n)}$
of $L^{(n)}$ is negative.
\end{lemma}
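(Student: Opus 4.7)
The plan is to exploit the left-multiplication symmetry of $L^{(n)}$ and then handle $n=4$ via Fourier analysis on $\mathcal{S}_n$, extending to $n\geq 5$ by a simple embedding induction.

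First, since simultaneously relabeling the values of two permutations by a common $g\in\mathcal{S}_n$ preserves common subsequences, one has $LCS(g\sigma,g\tau)=LCS(\sigma,\tau)$; taking $g=\sigma^{-1}$ gives $\ell_{\sigma,\tau}=LCS(e,\sigma^{-1}\tau)=LIS(\sigma^{-1}\tau)$. Thus $L^{(n)}$ is the matrix of convolution on $\mathbb{R}[\mathcal{S}_n]$ with kernel $f(g):=LIS(g)$, and by standard Fourier analysis on the symmetric group its spectrum is the multiset union, over irreducibles $\lambda\vdash n$, of the eigenvalues of $\hat f(\lambda):=\sum_g LIS(g)\rho_\lambda(g)$, each with multiplicity $\dim\lambda$. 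Hence to prove $\lambda_1^{(n)}<0$ it suffices to find one $\lambda$ with $\operatorname{tr}\hat f(\lambda)=\sum_g LIS(g)\chi_\lambda(g)<0$.

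For the base case $n=4$ I would take $\lambda=(2,2)$, the two-dimensional irreducible. A routine enumeration of $LIS$ over $\mathcal{S}_4$ by cycle type yields class sums $(4,15,5,20,14)$ on the conjugacy classes $e$, transpositions, double transpositions, $3$-cycles, $4$-cycles; pairing with the known character values $\chi_{(2,2)}=(2,0,2,-1,0)$ gives trace $-2$. Since $\hat f((2,2))$ is only $2\times 2$, this forces a negative eigenvalue. A self-contained certificate is the class function $v_\sigma:=\chi_{(2,2)}(\sigma)\in\mathbb{R}^{24}$, which via the convolution identity $\chi_\lambda\ast\chi_\lambda=\tfrac{|\mathcal{S}_n|}{\dim\lambda}\chi_\lambda$ satisfies $v^\top L^{(4)} v=\tfrac{|\mathcal{S}_4|}{\dim(2,2)}\operatorname{tr}\hat f((2,2))=12\cdot(-2)=-24<0$.

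For $n\geq 5$ I would induct via the embedding $\iota:\mathcal{S}_{n-1}\hookrightarrow\mathcal{S}_n$ that appends the symbol $n$ at the last position. Given $v\in\mathbb{R}^{(n-1)!}$ with $v^\top L^{(n-1)}v<0$, I may assume $\sum_i v_i=0$: the all-ones vector $E$ is an eigenvector of $L^{(n-1)}$ with positive eigenvalue (the common row sum), and writing $v=\alpha E+w$ with $w\perp E$ yields $v^\top L^{(n-1)}v=\alpha^2 E^\top L^{(n-1)}E+w^\top L^{(n-1)}w$, so the projection $w$ still makes the form negative. For $\sigma,\tau\in\mathcal{S}_{n-1}$, the common terminal $n$ extends any common subsequence by exactly one, so $LCS^{(n)}(\iota\sigma,\iota\tau)=LCS^{(n-1)}(\sigma,\tau)+1$. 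Extending $w$ to $\tilde w\in\mathbb{R}^{n!}$ by zero off $\iota(\mathcal{S}_{n-1})$ therefore gives
$$\tilde w^\top L^{(n)}\tilde w=w^\top L^{(n-1)}w+\Bigl(\sum_i w_i\Bigr)^2=w^\top L^{(n-1)}w<0,$$
closing the induction.

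The main obstacle is the base case: the negative trace at $\lambda=(2,2)$ has to be verified by a careful tabulation of $LIS$ across $\mathcal{S}_4$ (best organized by cycle type) and application of the known character values. Once that is done, the inductive step is a two-line expansion and introduces no new ideas; the whole argument is essentially a character computation plus a clean combinatorial embedding.
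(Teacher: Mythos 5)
Your proof is correct, and it takes a genuinely different route from the paper's. The paper also reduces to $\ell_{ij}=LIS(\pi_i^{-1}\pi_j)$ and uses the same two structural facts you use (constant row sums, so $E$ is an eigenvector with positive eigenvalue, and the block identity behind $\tilde w^{\top}L^{(n)}\tilde w=w^{\top}L^{(n-1)}w+(\sum_i w_i)^2$, which appears there as ``the principal minor indexed by permutations ending in $k+1$ is $L^{(k)}+EE^{\top}$''), but its induction starts from the computed values $\lambda_1^{(2)}=1$, $\lambda_1^{(3)}=0$ and must therefore establish the \emph{strict} inequality $\lambda_1^{(k+1)}<\lambda_1^{(k)}$; the strictness is obtained by a contradiction argument using the bottom-left block of $L^{(k+1)}$, which equals $L^{(k)}$. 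You instead manufacture a strictly negative certificate already at $n=4$ by Fourier analysis on $\mathcal{S}_4$: the class sums of $LIS$ are $(4,15,5,20,14)$ and pairing with $\chi_{(2,2)}=(2,0,2,-1,0)$ indeed gives $\operatorname{tr}\hat f((2,2))=-2$, and the convolution identity turns this into the explicit test vector $v_\sigma=\chi_{(2,2)}(\sigma)$ with $v^{\top}L^{(4)}v=-24$ (I checked both the $LIS$ tabulation and the identity $v^{\top}L^{(4)}v=\tfrac{|\mathcal{S}_4|}{\dim(2,2)}\sum_g LIS(g)\chi_{(2,2)}(g)$). With a strictly negative base case, your inductive step only needs the non-strict propagation of a negative quadratic form, so you avoid the paper's contradiction argument entirely; the WLOG reduction to $\sum_i v_i=0$ via the decomposition $v=\alpha E+w$ is also sound since $E^{\top}L^{(n-1)}w=0$. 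What each approach buys: the paper's argument is elementary (no representation theory) and gives the extra quantitative information that $\lambda_1^{(n)}$ is strictly decreasing in $n$, whereas your argument yields a clean closed-form certificate and, more interestingly, exhibits the block decomposition of $L^{(n)}$ into the Fourier blocks $\hat f(\lambda)$ with multiplicities $\dim\lambda$, which is directly relevant to the spectral questions raised in Sections \ref{sec:propertyl} and \ref{sec:concluding-remarks}.
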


\begin{proof}
Linear algebra gives $\lambda_{1}^{(2)}=1$ and $\lambda_{1}^{(3)}=0$.
So to prove the result, it suffices to show that $\lambda_{1}^{(k+1)}<\lambda_{1}^{(k)}$,
$k\ge1$ and this is done by induction. The base case is true, since
$\lambda_{1}^{(2)}=1>0=\lambda_{1}^{(3)}$. To reveal the connection
between $L^{(k+1)}$ and $L^{(k)}$, the enumeration of ${\cal S}_{k+1}$
is iteratively built on that of ${\cal S}_{k}$ by inserting the new
element $(k+1)$ into the permutations from ${\cal S}_{k}$ in the
following way: the enumeration of the $(k+1)!$ permutations is split
into $(k+1)$ trunks of equal size $k!$. In the $ith$ trunk, the
new element $(k+1)$ is inserted behind the $(k+1-i)th$ digit in
the permutation from ${\cal S}_{k}$. (For example, if ${\cal S}_{2}$
is enumerated as $\{[12],[21]\}$, then the enumeration of the first
trunk in ${\cal S}_{3}$ is $\{[123],[213]\}$, the second is $\{[132],[231]\}$
and the third is $\{[312],[321]\}$. Then the overall enumeration
for ${\cal S}_{3}$ is $\{[123],[213],[132],[231],[312],[321]\}$.)

Via this enumeration, the principal minor of size $k!\times k!$ is
row and column indexed by the enumeration of the permutations $\{\pi_{i}^{(k)}\}_{i\in[k!]}$
from ${\cal S}_{k}$ with $(k+1)$ as the last digit, i.e., $\{[\pi_{i}^{(k)}(k+1)]\}_{i\in[k!]}\subseteq{\cal S}_{k+1}$.
Then the $(i,j)$ entry of the submatrix is 
\[
LCS([\pi_{i}(k+1)],[\pi_{j}(k+1)])=LCS(\pi_{i},\pi_{j})+1,
\]
since the last digit $(k+1)$ adds an extra element into the longest
common subsequences. Hence, the $k!\times k!$ principal minor of
$L^{(k+1)}$ is $L^{(k)}+E^{(k)}(E^{(k)})^{T}$, where $E^{(k)}$
is the vector of $\mathbb{R}^{k!}$ only made up of ones. Moreover,
notice that the sum of the $\pi_{i}$-indexed row of $L^{(k)}$ is
\begin{align*}
\sum_{j\in[k!]}LCS(\pi_{i},\pi_{j}) & =\sum_{j\in[k!]}LCS(id,\pi_{i}^{-1}\pi_{j})\\
 & =\sum_{j\in[k!]}LIS(\pi_{i}^{-1}\pi_{j}),
\end{align*}
since simultaneously relabeling $\pi_{i}$ and $\pi_{j}$ does not
change the length of the $LCSs$ and also since a particular relabeling
to make $\pi_{i}$ to be the identity permutation, which is equivalent
to left composition by $\pi_{i}^{-1}$, is applied here. Further,
any $LCS$ of the identity permutation and of $\pi_{i}^{-1}\pi_{j}$
is a $LIS$ of $\pi_{i}^{-1}\pi_{j}$ and vice versa. So the row sum
is equal to
\[
\sum_{j\in[k!]}LIS(\pi_{i}^{-1}\pi_{j})=\sum_{\pi\in{\cal S}_{k}}LIS(\pi),
\]
since left composition by $\pi_{i}^{-1}$ is a bijection from ${\cal S}_{k}$
to ${\cal S}_{k}$. This indicates that all the row sums of $L^{(k)}$
are equal. Hence, $E^{(k)}$ is actually a right eigenvector of $L^{(k)}$
and is associated with the row sum $\sum_{\pi\in{\cal S}_{k}}LIS(\pi)>0$
as its eigenvalue, which is distinct from the smallest eigenvalue
$\lambda_{1}^{(k)}\le0$.

On the other hand, since $L^{(k)}$ is symmetric, the eigenvectors
$R_{1}^{(k)}$ and $E^{(k)}$ associated with the eigenvalues $\lambda_{1}^{(k)}$
and $\sum_{\pi\in{\cal S}_{k}}LIS(\pi)$ are orthogonal, i.e.,
\begin{equation}
(E^{(k)})^{T}R_{1}^{(k)}=0.\label{eq:eigenperpend}
\end{equation}
Without loss of generality, let $R_{1}^{(k)}$ be a unit vector, then
from (\ref{eq:eigenperpend}),
\begin{align}
\lambda_{1}^{(k)} & =(R_{1}^{(k)})^{T}L^{(k)}(R_{1}^{(k)})\nonumber \\
 & =(R_{1}^{(k)})^{T}(L^{(k)}+E^{(k)}(E^{(k)})^{T})R_{1}^{(k)}.\label{eq:quad}
\end{align}
As $L^{(k)}+E^{(k)}(E^{(k)})^{T}$ is the $k!\times k!$ principal
minor of $L^{(k+1)}$, (\ref{eq:quad}) becomes
\begin{equation}
\left[\begin{array}{c}
R_{1}^{(k)}\\
0
\end{array}\right]^{T}L^{(k+1)}\left[\begin{array}{c}
R_{1}^{(k)}\\
0
\end{array}\right]\ge\min_{R^{T}E=0,||R||=1}R^{T}L^{(k+1)}R=\lambda_{1}^{(k+1)},\label{eq:inductionlbd}
\end{equation}
where $R_{1}^{(k)}$ is properly extended to $\left[\begin{array}{c}
R_{1}^{(k)}\\
0
\end{array}\right]\in\mathbb{R}^{(k+1)!}$ and where the above inequality holds true since $\left[\begin{array}{c}
R_{1}^{(k)}\\
0
\end{array}\right]^{T}E^{(k+1)}=(R_{1}^{(k)})^{T}E^{(k)}=0$ and $\left\Vert \left[\begin{array}{c}
R_{1}^{(k)}\\
0
\end{array}\right]\right\Vert =\left\Vert R_{1}^{(k)}\right\Vert =1$, where $\left\Vert \cdot\right\Vert $ denotes the corresponding
Euclidean norm. Moreover, equality in (\ref{eq:inductionlbd}) holds
if and only if $\left[\begin{array}{c}
R_{1}^{(k)}\\
0
\end{array}\right]$ is a eigenvector of $L^{(k+1)}$ associated with $\lambda_{1}^{(k+1)}$.
We show next, by contradiction, that this cannot be the case. Indeed,
assume that
\begin{equation}
L^{(k+1)}\left[\begin{array}{c}
R_{1}^{(k)}\\
0
\end{array}\right]=\lambda_{1}^{(k+1)}\left[\begin{array}{c}
R_{1}^{(k)}\\
0
\end{array}\right].\label{eq:eigendef}
\end{equation}
Now, consider the $k!\times k!$ submatrix at the bottom-left corner
of $L^{(k+1)}$, which is row-indexed by $\{[(k+1)\pi_{i}]\}_{i\in[k!]}$
and column-indexed by $\{[\pi_{i}(k+1)]\}_{i\in[k!]}$. Notice that
the $(i,j)$-entry of this submatrix is 
\[
LCS([(k+1)\pi_{i}],[\pi_{j}(k+1)])=LCS(\pi_{i},\pi_{j}),
\]
since $(k+1)$ can be in some $LCS$ only if the length of this $LCS$
is $1$. So this submatrix is in fact equal to $L^{(k)}$. Further,
the vector consisting of the bottom $k!$ elements on the left-hand-side
of (\ref{eq:eigendef}) is $L^{(k)}R_{1}^{(k)}=\lambda_{1}^{(k)}R_{1}^{(k)}$,
which is a non-zero vector. However, on the right-hand-side, the corresponding
bottom $k!$ elements of the vector $\left[\begin{array}{c}
R_{1}^{(k)}\\
0
\end{array}\right]$ form the zero vector. This leads to a contradiction\@. So,
\[
\lambda_{1}^{(2)}=1>0=\lambda_{1}^{(3)}>\lambda_{1}^{(4)}>\lambda_{1}^{(5)}...
\]
\end{proof}
The above result on the smallest negative eigenvalue, and its associated
eigenvector, will help build a distribution on ${\cal S}_{n}$, for
which the $LCSs$ have a smaller expectation than for the uniform
one.
\begin{theorem}
Let $\sigma_{1}$ and $\sigma_{2}$ be two i.i.d.~random permutations
sampled from a distribution $P$ on the symmetric group ${\cal S}_{n}$.
Then, for $n\le3$, the uniform distribution $U$ minimizes $\mathbb{E}_{p}[LCS(\sigma_{1},\sigma_{2})]$,
while, for $n\ge4$, $U$ is sub-optimal.
\end{theorem}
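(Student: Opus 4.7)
The plan is to treat the two regimes separately, using in both cases the quadratic reformulation (\ref{eq:quadoptform}) of the expectation. The key structural observation, already extracted inside the proof of Lemma \ref{lem:nondef}, is that $E$ is a right eigenvector of $L^{(n)}$ with positive eigenvalue $\mu := \sum_{\pi \in \mathcal{S}_n} LIS(\pi)$. Since $L^{(n)}$ is symmetric, any vector orthogonal to $E$ is $L^{(n)}$-orthogonal to $U = E/n!$, so for any probability distribution $P$ the decomposition $P = U + (P-U)$ yields
\[
P^T L^{(n)} P \;=\; U^T L^{(n)} U \;+\; (P-U)^T L^{(n)} (P-U),
\]
because $E^T(P-U) = 0$ kills the cross term. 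This identity reduces the question of whether $U$ is a minimizer to the sign of the quadratic form $Q^T L^{(n)} Q$ on the hyperplane $E^{\perp}$.

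For $n \le 3$, the excerpt already records that $L^{(2)}$ and $L^{(3)}$ are positive semi-definite (the case $n = 1$ is trivial), so $Q^T L^{(n)} Q \ge 0$ for every $Q \perp E$, and specializing to $Q = P - U$ gives $\mathbb{E}_{P}[LCS] \ge \mathbb{E}_{U}[LCS]$ directly.

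For $n \ge 4$, Lemma \ref{lem:nondef} supplies a unit eigenvector $R_{1}^{(n)}$ with eigenvalue $\lambda_{1}^{(n)} < 0$; since $\lambda_{1}^{(n)} \neq \mu$, the vector $R_{1}^{(n)}$ is automatically orthogonal to $E$. I would then perturb uniformly along this direction by setting
\[
P_{\varepsilon} := U + \varepsilon\, R_{1}^{(n)}.
\]
Because $E^{T} R_{1}^{(n)} = 0$ we have $E^{T} P_{\varepsilon} = 1$, and since $U$ has strictly positive entries $1/n!$, for $\varepsilon > 0$ sufficiently small $P_{\varepsilon}$ remains entrywise nonnegative and hence is a genuine probability distribution on $\mathcal{S}_{n}$. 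Applying the decomposition identity from the first paragraph then collapses everything to
\[
P_{\varepsilon}^{T} L^{(n)} P_{\varepsilon} \;=\; U^{T} L^{(n)} U + \varepsilon^{2}\, \lambda_{1}^{(n)} \;<\; U^{T} L^{(n)} U,
\]
which exhibits the claimed sub-optimality of $U$. The main (and rather minor) obstacle is really Lemma \ref{lem:nondef} itself; once it and the observation that $E$ is an eigenvector are in hand, the rest is a one-line spectral argument, and the only thing to watch is the feasibility of $P_{\varepsilon}$, which is automatic because $U$ lies in the relative interior of the probability simplex.
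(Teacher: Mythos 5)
Your proposal is correct and follows essentially the same route as the paper: the same orthogonal decomposition $P^{T}LP=U^{T}LU+(P-U)^{T}L(P-U)$ (justified by $E$ being an eigenvector of the symmetric matrix $L^{(n)}$ and $E^{T}(P-U)=0$), positive semi-definiteness of $L^{(2)}$ and $L^{(3)}$ for the small cases, and for $n\ge4$ a small perturbation $U+\varepsilon R_{1}^{(n)}$ along the eigenvector of the negative eigenvalue from Lemma \ref{lem:nondef}, which stays a probability distribution because $U$ is interior to the simplex. This matches the paper's construction $P_{0}=U+cR_{1}^{(n)}$ in every essential respect.
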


\begin{proof}
As we have seen in (\ref{eq:quadoptform}),
\begin{align}
\mathbb{E}_{P}[LCS(\sigma_{1},\sigma_{2})] & =P^{T}LP\nonumber \\
 & =(P-U)^{T}L(P-U)+2P^{T}LU-U^{T}LU\nonumber \\
 & =(P-U)^{T}L(P-U)+2U^{T}LU-U^{T}LU\nonumber \\
 & =(P-U)^{T}L(P-U)+U^{T}LU,\label{eq:quadformU}
\end{align}
where $P^{T}LU=U^{T}LU$, since $U$ is an eigenvector of $L$ and
$P^{T}U=1$.

When $n=2,3$, $L^{(n)}$ is positive semi-definite and therefore
$(P-U)^{T}L(P-U)\ge0$. So, $P^{T}LP\ge U^{T}LU$.

However, when $n\ge4$, by Lemma \ref{lem:nondef}, the smallest eigenvalue
$\lambda_{1}^{(n)}$ is strictly negative and the associated eigenvector
$R_{1}^{(n)}$ is such that $U^{T}R_{1}^{(n)}=0=E^{T}R_{1}^{(n)}$.
Hence, there exists a positive constant $c$ such that $cR_{1}^{(n)}\succeq-1/n!$,
where $\succeq$ stands for componentwise inequality. Let $P_{0}$
be such that $P_{0}-U=cR_{1}^{(n)}$, then it is immediate that 
\[
E^{T}P_{0}=E^{T}(U+cR_{1}^{(n)})=1+0=1,
\]
and that
\[
P_{0}=U+cR_{1}^{(n)}\succeq0.
\]
Therefore, $P_{0}$ is a well-defined distribution on ${\cal S}_{n}$.
On the other hand, by (\ref{eq:quadformU}), the expectation under
$P_{0}$ is such that
\begin{align}
\mathbb{E}_{P_{0}}[LCS(\sigma_{1},\sigma_{2})] & =(P_{0}-U)^{T}L(P_{0}-U)+U^{T}LU\nonumber \\
 & =c^{2}(R_{1}^{(n)})^{T}L(R_{1}^{(n)})+U^{T}LU\nonumber \\
 & =c^{2}\lambda_{1}^{(n)}+U^{T}LU\nonumber \\
 & <U^{T}LU.\label{eq:ULUbound}
\end{align}
However, the right-hand side of (\ref{eq:ULUbound}) is nothing but
the expectation under the uniform distribution, namely, $\mathbb{E}_{U}[LCS(\sigma_{1},\sigma_{2})]$.
\end{proof}
The existence of negative eigenvalues contributes to the above construction
and to the corresponding counterexample. So, as a next step, properties
of this smallest negative eigenvalue and of the spectrum of the coefficient
matrix $L^{(n)}$ are explored.

%%%%%%%%%%%%%%%%%%%%%%%%%%%%%%%%%%%%%%%%%%%%%%%%%%%%%%%
\section{Further Properties of $L^{(n)}$\label{sec:propertyl}}

As we have seen, the vector $E^{(n)}$ which is made up of only ones
is an eigenvector associated with the eigenvalue $\sum_{\pi\in S_{n}}LIS(\pi)$.
It is not hard to show that this eigenvalue is, in fact, the spectral
radius of $L^{(n)}$.
\begin{proposition}
$\sum_{\pi\in S_{k}}LIS(\pi)$ is the spectral radius of $L^{(n)}$.
\end{proposition}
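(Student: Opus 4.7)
The plan is to combine two facts: $L^{(n)}$ is symmetric with non-negative entries, and (as already established in the course of proving Lemma~\ref{lem:nondef}) every row sum of $L^{(n)}$ equals $s_n := \sum_{\pi\in{\cal S}_n} LIS(\pi)$, with $E^{(n)}$ the corresponding eigenvector. It therefore suffices to show that no eigenvalue of $L^{(n)}$ exceeds $s_n$ in absolute value; since $s_n$ is already attained as an eigenvalue, equality will then be automatic.

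The upper bound I would use is a standard Perron--Frobenius-type estimate. Let $\lambda$ be any (necessarily real) eigenvalue of $L^{(n)}$, with eigenvector $v\neq 0$. Writing $L^{(n)}v=\lambda v$ componentwise and using $\ell_{ij}\ge 0$, the triangle inequality gives $|\lambda|\,|v_i|\le\sum_j \ell_{ij}|v_j|$ for each $i$. Summing over $i$ and swapping the order of summation,
\[
|\lambda|\sum_i|v_i| \;\le\; \sum_j|v_j|\sum_i\ell_{ij} \;=\; s_n\sum_j|v_j|,
\]
where I use that each column sum of $L^{(n)}$ also equals $s_n$, by symmetry. Dividing by $\sum_i|v_i|>0$ yields $|\lambda|\le s_n$, and combining this with the eigenpair $(s_n,E^{(n)})$ identifies $s_n$ as the spectral radius of $L^{(n)}$.

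I do not anticipate any real obstacle here: once one notices that $L^{(n)}$ is a non-negative symmetric matrix with constant row sums, the conclusion is essentially forced by Perron--Frobenius. The two ingredients that must be cited are the non-negativity of each $\ell_{ij}=LCS(\pi_i,\pi_j)$, which is immediate, and the constancy of the row sums, which was already derived via the relabeling identity $LCS(\pi_i,\pi_j)=LIS(\pi_i^{-1}\pi_j)$ in the proof of Lemma~\ref{lem:nondef}.
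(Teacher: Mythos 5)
Your proof is correct and follows essentially the same route as the paper: both bound every eigenvalue in absolute value by the constant row sum $\sum_{\pi\in{\cal S}_n}LIS(\pi)$ using the non-negativity of the entries of $L^{(n)}$, and then invoke the eigenpair $\bigl(\sum_{\pi}LIS(\pi),E^{(n)}\bigr)$. The only difference is cosmetic: you sum the componentwise inequality over all indices (an $\ell^{1}$-type estimate using column sums, equal to row sums by symmetry), whereas the paper evaluates it at a component of maximal modulus (an $\ell^{\infty}$-type estimate); both are the same Perron--Frobenius-style argument.
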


\begin{proof}
Without loss of generality, let $(\lambda,R)$ be a pair of eigenvalue
and corresponding eigenvector of $L^{(n)}$ such that $\max_{i\in[n!]}\vert r_{i}\vert=1$,
where $R=(r_{1},...,r_{n!})^{T}$, and let $i_{0}$ be the index such
that $\vert r_{i_{0}}\vert=1$. Let us focus now on the $i_{0}th$
element of $\lambda R$. Then, since $L^{(n)}R=\lambda R$, 
\begin{eqnarray*}
|\lambda| & = & |\lambda r_{i_{0}}|\\
 & = & \left|\sum_{j\in[n!]}LCS(\pi_{i_{0}},\pi_{j})r_{j}\right|\\
 & \le & \sum_{j\in[n!]}LCS(\pi_{i_{0}},\pi_{j})\\
 & = & \sum_{j\in[n!]}LIS(\pi_{i_{0}}^{-1}\pi_{j})\\
 & = & \sum_{\pi\in S_{n}}LIS(\pi),
\end{eqnarray*}
with equality if and only if all the $r_{j}$'s have the same sign
and have absolute value equal to $1$.
\end{proof}
This gives a trivial bound on the smallest negative value $\lambda_{1}^{(n)}$:
namely, $$\lambda_{1}^{(n)}\ge \allowbreak -\sum_{\pi\in S_{n}}LIS(\pi).$$ Moreover,
since the expectation of the longest increasing subsequence of a uniform
random permutation is asymptotically $2\sqrt{n}$, this gives an asymptotic
order of $-2n!\sqrt{n}$ for the lower bound. On the other hand, we
are interested in an upper bound for $\lambda_{1}^{(n)}$. The next
result shows that $\lambda_{1}^{(n)}$ decreases at least exponentially
fast, in $n$.
\begin{proposition}
$\lambda_{1}^{(n)}\le2^{n-4}\lambda_{1}^{(4)}=-2^{n-3}<0$.
\end{proposition}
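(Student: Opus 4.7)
The plan is to obtain the bound by iterating a recursive inequality $\lambda_1^{(k+1)} \le 2\lambda_1^{(k)}$ for all $k \ge 4$, starting from a direct evaluation $\lambda_1^{(4)} = -2$. The base case would be handled by explicitly forming the $24 \times 24$ matrix $L^{(4)}$ under the enumeration of Lemma~\ref{lem:nondef} and computing its smallest eigenvalue, which is a routine finite computation.

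For the inductive step, I would exploit the block decomposition of $L^{(k+1)}$ arising from the $(k+1)$-trunk enumeration used in Lemma~\ref{lem:nondef}. That lemma already shows that the principal minor indexed by the first trunk $\{[\pi_i(k+1)]\}_{i\in[k!]}$ equals $L^{(k)} + E^{(k)}(E^{(k)})^{T}$ and that the off-diagonal block between the last trunk $\{[(k+1)\pi_i]\}_{i\in[k!]}$ and the first trunk is exactly $L^{(k)}$. An essentially identical LCS-based argument (prepending $(k+1)$ to both permutations lengthens every common subsequence by exactly one, just as appending does) yields that the principal minor of the last trunk is also $L^{(k)} + E^{(k)}(E^{(k)})^{T}$.

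With these three blocks in hand, I would apply the Rayleigh characterization of $\lambda_1^{(k+1)}$ to the test vector $v \in \mathbb{R}^{(k+1)!}$ that equals $R_1^{(k)}$ on both the first and the last trunk, and is zero on the intermediate $k-1$ trunks. Using $(E^{(k)})^{T} R_1^{(k)} = 0$ from Lemma~\ref{lem:nondef}, each of the two diagonal-block contributions collapses to $\lambda_1^{(k)}$ while the two symmetric cross terms together contribute $2\lambda_1^{(k)}$, so that $v^{T} L^{(k+1)} v = 4\lambda_1^{(k)}$ with $\|v\|^2 = 2$. The Rayleigh principle then yields $\lambda_1^{(k+1)} \le 2\lambda_1^{(k)}$, and iterating from $k=4$ gives $\lambda_1^{(n)} \le 2^{n-4}\lambda_1^{(4)} = -2^{n-3}$.

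I do not expect a significant obstacle. The inductive step is a clean Rayleigh quotient computation, whose one subtle point is the choice of the \emph{same} (rather than opposite) sign for $R_1^{(k)}$ on the two trunks: the opposite-sign choice gives $v^{T} L^{(k+1)} v = 0$ and only the trivial bound $\lambda_1^{(k+1)} \le 0$, whereas matching signs double the gain and produce the desired exponential rate. The most mechanical step is the base-case verification $\lambda_1^{(4)} = -2$.
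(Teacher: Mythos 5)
Your proposal is correct and follows essentially the same route as the paper: the same block structure of $L^{(k+1)}$ (diagonal blocks $L^{(k)}+E^{(k)}(E^{(k)})^{T}$, corner blocks $L^{(k)}$), the same test vector equal to $R_{1}^{(k)}$ on the first and last trunks and zero elsewhere, yielding $v^{T}L^{(k+1)}v=4\lambda_{1}^{(k)}$ with $\|v\|^{2}=2$ and hence $\lambda_{1}^{(k+1)}\le2\lambda_{1}^{(k)}$, iterated from the computed value $\lambda_{1}^{(4)}=-2$. Your remark about choosing the same sign on both trunks is a sensible sanity check and does not alter the argument.
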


\begin{proof}
\begin{flushleft}
This is proved by showing that $\lambda_{1}^{(n+1)}\le2\lambda_{1}^{(n)}$.
As well known,
\begin{equation}
\lambda_{1}^{(n+1)}=\min_{E^{T}R=0}\frac{R^{T}L^{(n+1)}R}{R^{T}R}.\label{eq:smallesteigenval}
\end{equation}
Let $\lambda_{1}^{(n)}$ be the smallest eigenvalues of $L^{(n)}$
and let $R^{(n)}$ be the corresponding eigenvector. Then, in generating
$L^{(n+1)}$ from $L^{(n)}$ as done in the proof of Lemma \ref{lem:nondef},
the $n!\times n!$ principal minor of $L^{(n+1)}$ is $L^{(n)}+EE^{T}$,
while its bottom-left $n!\times n!$ submatrix is $L^{(n)}$. Symmetrically,
it can be proved that the top-right $n!\times n!$ submatrix is also
$L^{(n)}$, while the bottom-right $n!\times n!$ submatrix is $L^{(n)}+EE^{T}$,
i.e., $L^{(n+1)}$ is 
\[
\left[\begin{array}{ccc}
L^{(n)}+EE^{T} & \cdots & L^{(n)}\\
\vdots & \ddots & \vdots\\
L^{(n)} & \cdots & L^{(n)}+EE^{T}
\end{array}\right].
\]
Further, let
\[
R=\left[\begin{array}{c}
R_{1}^{(n)}\\
0\\
\vdots\\
0\\
R_{1}^{(n)}
\end{array}\right].
\]
Then $E^{T}R=E^{T}R_{1}^{(n)}+E^{T}R_{1}^{(n)}=0$, where, by an abuse
of notation, $E$ denotes the vector only made up of ones and of the
appropriate dimension. Also,
\begin{align*}
\left\Vert R\right\Vert ^{2} & =R^{T}R=2\left\Vert R_{1}^{(n)}\right\Vert ^{2}=2.
\end{align*}
In (\ref{eq:smallesteigenval}), the corresponding numerator $R^{T}L^{(n+1)}R$
is 
\begin{align*}
\left[\begin{array}{c}
R_{1}^{(n)}\\
0\\
\vdots\\
0\\
R_{1}^{(n)}
\end{array}\right]^{T} & \left[\begin{array}{ccc}
L^{(n)}+EE^{T} & \cdots & L^{(n)}\\
\vdots & \ddots & \vdots\\
L^{(n)} & \cdots & L^{(n)}+EE^{T}
\end{array}\right]\left[\begin{array}{c}
R_{1}^{(n)}\\
0\\
\vdots\\
0\\
R_{1}^{(n)}
\end{array}\right]\\
 & =2\left(R_{1}^{(n)}\right)^{T}\left(L^{(n)}+EE^{T}\right)\left(R_{1}^{(n)}\right)+2\left(R_{1}^{(n)}\right)^{T}L^{(n)}\left(R_{1}^{(n)}\right)\\
 & =4\left(R_{1}^{(n)}\right)^{T}L^{(n)}\left(R_{1}^{(n)}\right)=4\lambda_{1}^{(n)}.
\end{align*}
Thus, 
\[
\lambda_{1}^{(n+1)}\le2\lambda_{1}^{(n)}.
\]
\par\end{flushleft}

\end{proof}
By a very similar method, it can also be proved, as shown next, that
the second largest eigenvalue $\lambda_{n!-1}^{(n)}$, which is positive,
grows at least exponentially fast.
\begin{proposition}
$\lambda_{n!-1}^{(n)}\ge2^{n-2}\lambda_{1}^{(2)}=2^{n-2}>0$.
\end{proposition}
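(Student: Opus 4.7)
The plan is to mirror the proof of the previous proposition, but replace the minimization with a maximization via the Courant--Fischer characterization. Since $E^{(n+1)}$ is an eigenvector of $L^{(n+1)}$ corresponding to its spectral radius (by the preceding proposition), and since $L^{(n+1)}$ is symmetric, the orthogonal complement of $E^{(n+1)}$ is invariant under $L^{(n+1)}$, and
\[
\lambda_{(n+1)!-1}^{(n+1)} \;=\; \max_{\substack{E^T R=0 \\ \|R\|=1}} R^T L^{(n+1)} R.
\]
So it suffices to exhibit a single test vector $R$ with $E^T R = 0$ and a Rayleigh quotient at least $2\lambda_{n!-1}^{(n)}$, which will give the recursion $\lambda_{(n+1)!-1}^{(n+1)}\ge 2\lambda_{n!-1}^{(n)}$ and, after iterating from the base case $n=2$ where $\lambda_{n!-1}^{(2)} = \lambda_1^{(2)}=1$, the claimed bound $\lambda_{n!-1}^{(n)}\ge 2^{n-2}$.

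First I would reuse the block decomposition of $L^{(n+1)}$ established in the previous proposition: it consists of $(n+1)\times(n+1)$ blocks of size $n!\times n!$, with diagonal blocks equal to $L^{(n)}+EE^T$ and off-diagonal blocks equal to $L^{(n)}$. Let $R_{n!-1}^{(n)}$ be a unit eigenvector of $L^{(n)}$ associated with $\lambda_{n!-1}^{(n)}$. Since $\lambda_{n!-1}^{(n)}$ is distinct from the spectral radius (which is strictly larger, being attained only by $E^{(n)}$ in the argument of the previous proposition), symmetry of $L^{(n)}$ forces $(E^{(n)})^T R_{n!-1}^{(n)} = 0$. Mimicking the earlier test vector, I would set
\[
R \;=\; \left[\,R_{n!-1}^{(n)}\,;\,0\,;\ldots;\,0\,;\,R_{n!-1}^{(n)}\,\right]^T\in\mathbb{R}^{(n+1)!},
\]
with the nonzero blocks placed in the first and last of the $n+1$ slots.

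Next I would verify the two constraints: $E^T R = 2(E^{(n)})^T R_{n!-1}^{(n)} = 0$, and $\|R\|^2 = 2$. Then I would evaluate $R^T L^{(n+1)} R$ using the block structure: only the $(1,1)$, $(n+1,n+1)$, $(1,n+1)$, and $(n+1,1)$ blocks contribute, producing
\[
R^T L^{(n+1)} R \;=\; 2\bigl(R_{n!-1}^{(n)}\bigr)^T\!\bigl(L^{(n)}+EE^T\bigr) R_{n!-1}^{(n)} + 2\bigl(R_{n!-1}^{(n)}\bigr)^T L^{(n)} R_{n!-1}^{(n)}.
\]
Orthogonality $(E^{(n)})^T R_{n!-1}^{(n)} = 0$ kills the $EE^T$ terms, so the sum collapses to $4\lambda_{n!-1}^{(n)}$. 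Dividing by $\|R\|^2 = 2$ and invoking the max characterization yields $\lambda_{(n+1)!-1}^{(n+1)} \ge 2\lambda_{n!-1}^{(n)}$, and induction finishes the argument.

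The main subtlety, rather than any calculation, is confirming that the constrained maximization really gives $\lambda_{n!-1}^{(n+1)}$ and not some other eigenvalue: this requires the spectral radius of $L^{(n+1)}$ to be attained (and its eigenspace spanned) by $E^{(n+1)}$, which the uniqueness of equality in the previous proposition's estimate does provide. With that in hand, the remainder is a clean analogue of the proof for $\lambda_1^{(n)}$.
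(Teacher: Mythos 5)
Your proposal is correct and follows essentially the same route as the paper: the same block decomposition of $L^{(n+1)}$, the same test vector with $R_{n!-1}^{(n)}$ in the first and last blocks, and the constrained Rayleigh-quotient characterization over $\{R: E^{T}R=0\}$ yielding $\lambda_{(n+1)!-1}^{(n+1)}\ge 2\lambda_{n!-1}^{(n)}$. You in fact supply more detail than the paper does (justifying the max identity via the simplicity of the top eigenvalue and the orthogonality $(E^{(n)})^{T}R_{n!-1}^{(n)}=0$), which the paper leaves implicit.
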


\begin{proof}
Using the identity
\[
\lambda_{(n+1)!-1}^{(n+1)}=\max_{E^{T}R=0}\frac{R^{T}L^{(n+1)}R}{R^{T}R},
\]
with a particular choice of 
\[
R=\left[\begin{array}{c}
R_{n!-1}^{(n)}\\
0\\
\vdots\\
0\\
R_{n!-1}^{(n)}
\end{array}\right],
\]
where $R_{n!-1}^{(n)}$ is the eigenvector associated with the second
largest eigenvalue $\lambda_{n!-1}^{(n)}$ of $L^{(n)}$, leads to
$\lambda_{(n+1)!-1}^{(n+1)}\ge2\lambda_{n!-1}^{(n)}$ and thus proves
the result.
\end{proof}
The above bounds for $\lambda_{1}^{(n)}$ and $\lambda_{n!-1}^{(n)}$
are far from tight even as far as their asymptotic orders are concerned.
Numerical evidence is collected in the following table:
\begin{center}
\begin{tabular}{|c|c|c||c|c|}
\hline 
$n$ & $\lambda_{1}^{(n)}$ & $\lambda_{1}^{(n+1)}/\lambda_{1}^{(n)}$ & $\lambda_{n!-1}^{(n)}$ & $\lambda_{(n+1)!-1}^{(n+1)}/\lambda_{n!-1}^{(n)}$\tabularnewline
\hline 
\hline 
$4$ & $-2$ & $1$ & $6.6055$ & $1$\tabularnewline
\hline 
$5$ & $-5.0835$ & $2.5417$ & $30.0293$ & $4.5460$\tabularnewline
\hline 
$6$ & $-20.2413$ & $3.9817$ & $166.1372$ & $5.5324$\tabularnewline
\hline 
$7$ & $-102.9541$ & $5.0860$ & $1083.7641$ & $6.5233$\tabularnewline
\hline 
\end{tabular}.
\par\end{center}

A reasonable conjecture will be that both the smallest and the second
largest eigenvalues grow at a factorial-like speed. More precisely,
we believe that
\[
\lim_{n\rightarrow+\infty}\frac{\lambda_{1}^{(n+1)}}{\lambda_{1}^{(n)}(n-1)}=c_{1}\ge1,
\]
and that 
\[
\lim_{n\rightarrow+\infty}\frac{\lambda_{(n+1)!-1}^{(n+1)}}{\lambda_{n!-1}^{(n)}(n+1/2)}=c_{2}\ge1.
\]
%%%%%%%%%%%%%%%%%%%%%%%%%%%%%%%%%%%%%%%%%%%%%%%%%%%%%%%

\section{Concluding Remarks\label{sec:concluding-remarks}}

The $\sqrt{n}$ lower-bound conjecture of Bukh and Zhou is still open
and seems quite reasonable in view of the fact that $\mathbb{E}LCS(\sigma_{1},\sigma_{2})\sim2\sqrt{n}$,
in case $\sigma_{1}$ is uniform and $\sigma_{2}$ arbitrary (again,
see \cite{houdre2014central}). We do not have a proof of this conjecture,
but let us nevertheless present, next, a quick $\sqrt[3]{n}$ lower
bound result.

We start with a lemma describing a balanced property among the lengths
of the $LCSs$ of pairs of any three arbitrary deterministic permutations.
This result is essentially due to Beame and Huynh-Ngoc (\cite{beame2008value}).
\begin{lemma}
\label{lem:prod}For any $\pi_{i}\in{\cal S}_{n}$ $(i=1,2,3$), 
\[
LCS(\pi_{1},\pi_{2})LCS(\pi_{2},\pi_{3})LCS(\pi_{3},\pi_{1})\ge n.
\]
\end{lemma}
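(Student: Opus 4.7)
The plan is to reduce to a symmetric case by relabeling, and then combine Mirsky's theorem with a pigeonhole argument. First, I would use the fact (implicit in the proof of Lemma~\ref{lem:nondef}) that simultaneously left-composing $\pi_1,\pi_2,\pi_3$ by $\pi_1^{-1}$ preserves each pairwise $LCS$, so without loss of generality $\pi_1=\mathrm{id}$. Under this normalization, $LCS(\pi_1,\pi_j)=LIS(\pi_j)$ for $j=2,3$, and setting $a=LIS(\pi_2)$, $b=LCS(\pi_2,\pi_3)$, $c=LIS(\pi_3)$, the goal reduces to showing $abc\ge n$.

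Next, I would apply Mirsky's theorem to $[n]$ viewed as a set of values. Define a partial order by $u\prec_2 v$ iff $u<v$ and $u$ precedes $v$ in $\pi_2$. A chain is then an increasing subsequence of $\pi_2$, so the longest chain has length $a$, and Mirsky yields a partition of $[n]$ into $a$ antichains $A_1,\ldots,A_a$. In each $A_i$ no two values are $\prec_2$-comparable, which forces the elements of $A_i$ to appear in strictly decreasing value order when $\pi_2$ is read left-to-right. An analogous construction for $\pi_3$ produces a partition of $[n]$ into $c$ antichains $C_1,\ldots,C_c$ with the corresponding property for $\pi_3$.

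Finally, since the $ac$ intersections $A_i\cap C_j$ cover $[n]$, pigeonhole gives a block $B=A_{i_0}\cap C_{j_0}$ with $|B|\ge n/(ac)$. Writing $B=\{v_1<v_2<\cdots<v_k\}$ with $k\ge n/(ac)$, the reversed sequence $(v_k,v_{k-1},\ldots,v_1)$ is by construction a subsequence of both $\pi_2$ and $\pi_3$, hence a common subsequence of length $k$. Thus $b\ge k\ge n/(ac)$, i.e., $abc\ge n$. I do not expect a serious obstacle; the only point requiring care is to state Mirsky's partition on \emph{values} (with the order induced by $\pi_2$) rather than on positions, so that lying in a common $A_i\cap C_j$ directly yields an honest common subsequence of $\pi_2$ and $\pi_3$.
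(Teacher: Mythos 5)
Your proof is correct. Note, however, that the paper does not actually present an argument for this lemma: its ``proof'' consists of citing Lemma 5.9 of Beame and Huynh-Ngoc with ``slight modification,'' plus the tightness example $\pi_{1}=\pi_{2}=id$, $\pi_{3}=rev(id)$. Your write-up therefore supplies what the paper delegates to the reference, and it checks out at every step: the common relabelling by $\pi_{1}^{-1}$ (the same trick the paper uses inside the proof of Lemma \ref{lem:nondef}) preserves all three pairwise $LCS$ lengths, so the reduction to $\pi_{1}=\mathrm{id}$ with $a=LIS(\pi_{2})$, $c=LIS(\pi_{3})$, $b=LCS(\pi_{2},\pi_{3})$ is legitimate; the relation $u\prec_{2}v$ iff $u<v$ and $u$ precedes $v$ in $\pi_{2}$ is indeed a partial order on values whose chains are exactly the increasing subsequences of $\pi_{2}$, so Mirsky gives a cover of $[n]$ by $a$ antichains, each of which, by incomparability, appears in strictly decreasing value order in $\pi_{2}$ (and similarly $c$ such classes for $\pi_{3}$); and the pigeonhole block $B=A_{i_{0}}\cap C_{j_{0}}$ of size at least $n/(ac)$, read in decreasing value order, is an honest common subsequence of $\pi_{2}$ and $\pi_{3}$, giving $b\ge n/(ac)$ and hence $abc\ge n$. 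Your argument is the natural three-permutation extension of the Erd\H{o}s--Szekeres/Dilworth argument, precisely in the spirit of the paper's remark that the lemma generalizes Erd\H{o}s--Szekeres; compared with the paper's bare citation, it has the advantage of being self-contained and elementary, and if you wished to avoid invoking Mirsky's theorem by name you could replace it by the standard labelling of each value by the length of the longest increasing subsequence of $\pi_{2}$ ending there, whose level sets are exactly the decreasing classes you need.
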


\begin{proof}
The proof of Lemma $5.9$ in \cite{beame2008value} applies here with
slight modification. We further note that this inequality is tight,
since letting $\pi_{1}=\pi_{2}=id$ and $\pi_{3}=rev(id)$, which
is the reversal of the identity permutation gives, $LCS(\pi_{1},\pi_{2})LCS(\pi_{2},\pi_{3})LCS(\pi_{3},\pi_{1})=n$. 
\end{proof}
In Lemma \ref{lem:prod}, taking $(\pi_{1},\pi_{2})=(id,rev(id))$
gives, for any third permutation $\pi_{3}$, $LCS(id,\pi_{3})LCS(rev(id),\pi_{3})\ge n/LCS(id,rev(id))=n$.
But, since $LCS(id,\pi_{3})$ and $LCS(rev(id),\pi_{3})$ are respectively
the lengths of the longest increasing/decreasing subsequences of $\pi_{3}$,
this lemma can be considered to be a generalization of a well-known
classical result of Erd\"{o}s and Szekeres (see \cite{romik2015surprising}).

We are now ready for the cubic root lower bound.
\begin{proposition}
Let $P$ be an arbitrary probability distribution on ${\cal S}_{n}$
and let $\sigma_{1}$ and $\sigma_{2}$ be two i.i.d.$\ $random permutations
sampled from $P$. Then, for any $n\ge1$, $\mathbb{E}_{P}[LCS(\sigma_{1},\sigma_{2})]\ge\sqrt[3]{n}$.
\end{proposition}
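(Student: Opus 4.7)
The plan is to exploit Lemma \ref{lem:prod} by introducing a third i.i.d. sample and then using AM--GM pointwise before passing to expectations. Let $\sigma_1, \sigma_2, \sigma_3$ be three i.i.d.\ random permutations sampled from $P$. Since the three random permutations are exchangeable, the three pairwise random variables $LCS(\sigma_1,\sigma_2)$, $LCS(\sigma_2,\sigma_3)$, and $LCS(\sigma_3,\sigma_1)$ are identically distributed (although certainly not independent), and each has the same law as $LCS(\sigma_1,\sigma_2)$ in the original statement.

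By Lemma \ref{lem:prod} applied pointwise to any realization of $(\sigma_1,\sigma_2,\sigma_3)$, one has
\[
LCS(\sigma_1,\sigma_2)\cdot LCS(\sigma_2,\sigma_3)\cdot LCS(\sigma_3,\sigma_1)\ge n
\]
almost surely. The AM--GM inequality, again applied pointwise, then gives
\[
LCS(\sigma_1,\sigma_2)+LCS(\sigma_2,\sigma_3)+LCS(\sigma_3,\sigma_1)\ge 3\sqrt[3]{n}
\]
almost surely. Taking expectations and using linearity, together with the equality in law of the three summands, yields
\[
3\,\mathbb{E}_P[LCS(\sigma_1,\sigma_2)]\ge 3\sqrt[3]{n},
\]
which is exactly the desired bound.

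The whole argument is therefore essentially two lines once the right auxiliary object (a third i.i.d.\ copy) is introduced; all the nontrivial combinatorial content has been pushed into Lemma \ref{lem:prod}. There is no real obstacle here, and it is worth noting that this technique does not appear to extend to give the conjectured $\sqrt{n}$ bound: any analogous inequality of the form $\prod_{i<j}LCS(\pi_i,\pi_j)\ge n^{k/2}$ over $k$ permutations would be needed to close the gap via the same AM--GM trick, and such a refinement is not available from Lemma \ref{lem:prod} alone.
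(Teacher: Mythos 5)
Your proof is correct and is essentially the same argument as the paper's: the paper's successive weighted summations over $p(\pi_1)$, $p(\pi_2)$ and $p(\pi_3)$ are exactly your introduction of a third i.i.d.\ copy, with Lemma \ref{lem:prod} plus AM--GM applied pointwise and the three pairwise terms identified by exchangeability. Your probabilistic phrasing is simply a cleaner way of writing the same computation.
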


\begin{proof}
Let $\pi_{1},\ \pi_{2}$ and $\pi_{3}\in S_{n}$ and set 
$$L(\pi_{i}):=\sum_{\pi_{1}\in{\cal S}_{n}}p(\pi_{1})LCS(\pi_{1},\pi_{i}) \allowbreak =\sum_{\pi_{1}\in{\cal S}_{n}}LCS(\pi_{i},\pi_{1})p(\pi_{1}),$$
for $i=2,3$. Then,
\begin{align}
 & L(\pi_{2})+LCS(\pi_{2},\pi_{3})+L(\pi_{3})\nonumber \\
= & \sum_{\pi_{1}\in S_{n}}p(\pi_{1})(LCS(\pi_{1},\pi_{2})+LCS(\pi_{2},\pi_{3})+LCS(\pi_{3},\pi_{1}))=3\sqrt[3]{n}\sum_{\pi_{1}\in S_{n}}p(\pi_{1})=3\sqrt[3]{n},\label{eq:expectationsumlb1}
\end{align}
by the arithmetic mean-geometric mean inequality and the previous
lemma. Further, summing over $p(\pi_{2})$ in (\ref{eq:expectationsumlb1})
gives:
\begin{align*}
\sum_{\pi_{2}\in{\cal S}_{n}} & p(\pi_{2})(L(\pi_{2})+LCS(\pi_{2},\pi_{3})+L(\pi_{3}))\\
= & \sum_{\pi_{2}\in{\cal S}_{n}}p(\pi_{2})L(\pi_{2})+L(\pi_{3})+L(\pi_{3})\ge3\sqrt[3]{n}.
\end{align*}
Repeating this last procedure but with weights over $p(\pi_{3})$
leads to
\begin{equation}
\sum_{\pi_{2}\in{\cal S}_{n}}p(\pi_{2})L(\pi_{2})+2\sum_{\pi_{3}\in{\cal S}_{n}}p(\pi_{3})L(\pi_{3})=3\sum_{\pi\in S_{n}}p(\pi)L(\pi)\ge3\sqrt[3]{n}.\label{eq:expectsumlower2}
\end{equation}
However,
\begin{align*}
\mathbb{E}_{P}[LCS(\sigma_{1},\sigma_{2})] & =\sum_{\pi_{1}\in{\cal S}_{n}}\sum_{\pi_{2}\in{\cal S}_{n}}p(\pi_{1})LCS(\pi_{1},\pi_{2})p(\pi_{2})\\
 & =\sum_{\pi_{1}\in{\cal S}_{n}}p(\pi_{1})\sum_{\pi_{2}\in{\cal S}_{n}}LCS(\pi_{1},\pi_{2})p(\pi_{2})\\
 & =\sum_{\pi\in{\cal S}_{n}}p(\pi)L(\pi).
\end{align*}
Combining this last identity with (\ref{eq:expectsumlower2}) proves
the result.
\end{proof}
The above proof is simple; it basically averages out each $LCS(\cdot,\cdot)$
as $\sqrt[3]{n}$ on the summation weighted by $P$. However, in view
of the original conjecture, our partial results, as well as those mentioned in the introductory section, the cubic root
lower-bound is not tight. Apart from our curiosity concerning this
$\sqrt{n}$ conjecture, it would be interesting to know the exact
asymptotic order of the smallest eigenvalue $\lambda_{1}^{(n)}$ of
$L^{(n)}$. In contrast, the largest eigenvalue $\lambda_{n!}^{(n)}$
corresponding to the uniform distribution is known to be asymptotically
of order $2n!\sqrt{n}$, since it is equal to the length of the $LISs$
of a uniform random permutation of $[n]$ scaled by $n!$. In this
sense, the study of the length of the $LCSs$ between a pair of i.i.d.$\ $random
permutations having an arbitrary distribution, or equivalently, the
study of $L^{(n)}$, can be viewed as an extension of the study of
the length of the $LISs$ of a uniform random permutation of $[n]$.
Having a complete knowledge of the distribution of all the eigenvalues
of $L^{(n)}$ would be a nice achievement. 

%%%%%%%%%%%%%%%%%%%%%%%%%%%%%%%%%%%%%%%%%%%%%%%%%%%%%%%
% You do not have to use the same format for your references, but 
%    include everything in this file.  Don't use natbib please.
% If you use BibTeX to create a bibliography, copy the .bbl file into here.

\end{document}